\theoremstyle{plain}
\newtheorem{theorem}{Theorem}
\newtheorem{lemma}[theorem]{Lemma}
\theoremstyle{definition}
\theoremstyle{remark}
\newcommand{\R}{\mathbb{R}}    
\newcommand{\N}{\mathbb{N}}    
\newcommand{\No}{\textnumero}    
\DeclareMathOperator{\conv}{conv}
\DeclareMathOperator{\vertices}{vert}
\newcommand{\mn}{\mu_{\text{2n}}}
\newcommand{\msn}{\mu^{\text{s}}_{\text{2n}}}
\providecommand\given{} 
\newcommand\SetSymbol[1][]{%
	\nonscript\:#1\vert
	\allowbreak \nonscript\:	\mathopen{}}
\DeclarePairedDelimiterX\Set[1]\{\}{%
	\renewcommand\given{\SetSymbol[\delimsize]}	#1}
\title{On the minimum number of facets \\of a 2-neighborly polytope}
\author{Aleksandr Maksimenko}
\thanks{Supported by the~project \No\,1.5768.2017/9.10 of P.\,G.~Demidov Yaroslavl State University within State Assignment for~Research.}
\address{Laboratory of Discrete and Computational Geometry, P.G. Demidov Yaroslavl State University, ul. Sovetskaya 14, Yaroslavl 150000, Russia} 
\email{maximenko.a.n@gmail.com}
\begin{document}

\begin{abstract}
Let $\mn(d,v)$ (respectively, $\msn(d,v)$) be the minimal number of facets of a (simplicial) 2-neigh\-bor\-ly $d$-po\-ly\-tope with $v$ vertices, $v > d \ge 4$.
It is known that $\mn(4,v) = v (v-3)/2$, $\mn(d, d+2) = d+5$, $\mn(d,d+3) = d+7$ for $d \ge 5$, and $\mn(d,d+4) \in [d+5, d+8]$ for $d \ge 6$.
We show that $\mn(5, v) = \Omega(v^{4/3})$, $\mn(6, v) \ge v$,
and the equality $\mn(6, v) = v$ holds only for a simplex and for a dual 2-neighborly 6-polytope (if it exists) with $v \ge 27$.
By using $g$-theorem, we get $\msn(d, v) = \Delta (\Delta(d-3) + 3d - 5)/2 + d + 1$, where $\Delta = v - d - 1$.
Also we show that $\mn(d, v) \ge d+7$ for $v \ge d+4$.
\end{abstract}

\maketitle

\section{Introduction}

Let $P$ be a $d$-polytope, i.e., a $d$-dimensional convex polytope.
An $i$-dimensional face of $P$ is called \emph{$i$-face},
0-faces are \emph{vertices}, 1-faces are \emph{edges}, $(d-1)$-faces are \emph{facets},
 and $(d-2)$-faces are \emph{ridges}.
A polytope $P$ is \emph{simplicial} if every facet is a simplex, and $P$ is \emph{simple} if it is dual to a simplicial polytope.

Let $f_i(P)$ be the number of $i$-faces of $P$, $0 \le i \le d-1$.
The problem of estimating $f_i(P)$ (where $P$ belongs to some class of polytopes) in terms of $f_0(P)$ is well known.
For the class of simplicial polytopes, the problem is known as the upper bound and the lower bound theorems
(see~\cite[Chap.~10]{Grunbaum:2003} for details).
In~particular~\cite{Barnette:1971},
\begin{equation}
\label{eq:LowBoundSimplex}
f_{d-1} (P) \ge (d-1)(f_0 (P)-d) + 2 \quad \text{for a simplicial $d$-polytope $P$.}
\end{equation}
In~1990, G.~Blind and R.~Blind~\cite{Blind:1990} solved the lower bound problem for the class of polytopes without a triangle 2-face.
We raise the question for the class of 2-neigh\-bor\-ly polytopes.

A $d$-polytope $P$ is called \emph{$k$-neigh\-bor\-ly} if every subset of $k$ vertices forms the vertex set of some face of $P$.
Since every $d$-polytope is 1-neigh\-bor\-ly, we will consider
 $k$-neigh\-bor\-ly polytopes only for nontrivial cases $k \ge 2$.
For $k > d/2$, there is only one combinatorial type of a $k$-neigh\-bor\-ly $d$-polytope~--- a $d$-simplex~\cite[p.~123]{Grunbaum:2003}.
The same is true for $f_0(P) = d+1$.
Therefore, we suppose $d \ge 2k$ and $f_0(P) > d+1$.

A $\lfloor d/2\rfloor$-neigh\-bor\-ly polytope is called \emph{neigh\-bor\-ly}.
In~particular, every neigh\-bor\-ly $d$-polytope is 2-neigh\-bor\-ly for $d \ge 4$.
The family of neighborly polytopes is constantly attracting the attention of researchers. 
For $d = 2k$, these polytopes have the maximal number of facets over all $d$-polytopes with $n$ vertices~\cite{McMullen:1970}:
\begin{equation}
\label{EqNeighborly}
  f_{d-1}(P_{\text{neighborly}}) = \frac{n}{n-k} \binom{n - k}{k}.
\end{equation}

There exists a widespread feeling that $k$-neigh\-bor\-ly polytopes are very common among convex polytopes~\cite[p.~129--129a]{Grunbaum:2003}, \cite[sec.~3.4]{Gillmann:2006}, \cite{Padrol:2013}.
Moreover, combinatorial polytopes of NP-hard problems has $k$-neighborly faces with superpolynomial number of vertices~\cite{Maksimenko:2014, Maksimenko:2015}. 




We will denote by $\mn(d,v)$ the minimal number of facets of a 2-neigh\-bor\-ly $d$-po\-ly\-tope with $v$ vertices. From~\eqref{EqNeighborly}, we get
$\mn(4,v) = v(v-3)/2$.
In~\cite{Maksimenko:2010}, there was posed the following question: $\mn(d,v) \ge v$? This inequality was validated for two cases~\cite{Maksimenko:2010}: $d \le 6$ and $v \le d+5$.

In~Section~\ref{sec:examples}, we list examples of 2-neighborly $d$-polytopes with small difference $f_{d-1}(P) - f_0(P)$.

In~Section~\ref{sec:smalldim}, for 5- and 6-polytopes we show that
\[
\mn(5, v) \ge \min_{n \in \N} \max \left\{\frac{v (v - 1)}{n},  \frac{n(n - 3)}{2} + 1\right\} = \Omega(v^{4/3})
\]
and $\mn(6, v) \ge v$ and the equality $\mn(6, v) = v$ holds only for a simplex and for a dual 2-neighborly polytope with $v \ge 27$.

In~Section~\ref{sec:smallvert}, by using $g$-theorem, we get the lower bound for the number of $i$-faces of a simplicial 2-neigh\-bor\-ly $d$-polytope. 
Let $\msn(d,v)$ be the minimal number of facets of a simplicial 2-neigh\-bor\-ly $d$-po\-ly\-tope with $v$ vertices.
Obviously, $\mn(d, v) \le \msn(d, v)$.
We prove that
\begin{equation} 
\label{eq:LBs2n}
\msn(d, v) = \frac{\Delta (\Delta(d-3) + 3d - 5)}{2} + d + 1, \quad \text{where } \Delta = v - d - 1.
\end{equation} 
Also we show that $\mn(d, v) \ge d+7$ for $v \ge d+4$.

%
%

\section{Examples of 2-neighborly polytopes with small number of facets}
\label{sec:examples}

In this section we list all known (to the author) from the literature examples of 2-neighborly polytopes with small number of facets.

First of all, it is well known that $f_3 (P) = f_0(P) (f_0(P) - 3) / 2$ for a 2-neighborly 4-polytope $P$. In particular, this is true for cyclic polytopes.

Let $P$ be a 2-neighborly $d$-polytope. If $Q$ is an $r$-fold pyramid with basis $P$, then $Q$ is a~2-neigh\-bor\-ly $(d+r)$-polytope with $f_0(Q) = f_0(P) + r$ and $f_{d+r-1}(Q) = f_{d-1}(P) + r$~\cite[Sec.~4.2]{Grunbaum:2003}. Hence, by using cyclic 4-polytopes, for every $d > 4$ and $v > d$ we can construct an example of a~2-neighborly $d$-polytope $Q$ with $v$ vertices and $(v+4-d)(v+1-d)/2 + d-4$ facets, i.e.
\begin{equation}
\label{eq:d4}
f_{d-1}(Q) - f_0(Q) = (f_0(Q)+4-d)(f_0(Q)-d-1)/2.
\end{equation}

It is also well known that every $d$-polytope $P$ with $d+2$ vertices is a simplicial one (and inequality \eqref{eq:LowBoundSimplex} holds for it) or a pyramid over a $(d-1)$-polytope with $d+1$ vertices. Consequently, if it is 2-neighborly, then $f_{d-1}(P) \ge d+5$ and the equality is attained only on a $(d-4)$-fold pyramid over a cyclic 4-polytope with 6 vertices. It is interesting to remark that the last one can be realized as a 0/1-polytope (see fig.~\ref{fig:P46}).

\begin{figure}
	\begin{minipage}[t]{.21\linewidth}
		\centering
		(0, 0, 0, 0),\\
		(0, 0, 0, 1),\\
		(0, 0, 1, 0),\\
		(0, 1, 0, 0),\\
		(1, 0, 0, 1),\\
		(1, 1, 1, 0).
		\subcaption{$P_{4,6}$, $f=9$}\label{fig:P46}
	\end{minipage}%
	\begin{minipage}[t]{.24\linewidth}
		\centering
		(0, 0, 0, 0, 0),\\
		(0, 0, 0, 0, 1),\\
		(0, 0, 0, 1, 0),\\
		(0, 0, 1, 0, 0),\\
		(0, 1, 0, 0, 1),\\
		(0, 1, 1, 1, 0),\\
		(1, 0, 0, 1, 0),\\
		(1, 0, 1, 0, 1).
		\subcaption{$P_{5,8}$, $f=12$}\label{fig:P58}
	\end{minipage}
	\begin{minipage}[t]{.24\linewidth}
		\centering
		(0, 0, 0, 0, 0),\\
		(0, 0, 0, 0, 1),\\
		(0, 0, 0, 1, 0),\\
		(0, 0, 1, 0, 1),\\
		(0, 1, 0, 1, 0),\\
		(0, 1, 1, 0, 0),\\
		(1, 0, 0, 1, 1),\\
		(1, 0, 1, 0, 0),\\
		(1, 1, 0, 0, 0).
		\subcaption{$P_{5,9}$, $f=16$}\label{fig:P59}
	\end{minipage}
	\begin{minipage}[t]{.29\linewidth}
		\centering
		(0, 0, 0, 0, 0, 0),\\
		(0, 0, 0, 0, 0, 1),\\
		(0, 0, 0, 0, 1, 0),\\
		(0, 0, 0, 1, 0, 0),\\
		(0, 0, 1, 0, 0, 0),\\
		(0, 1, 0, 0, 0, 0),\\
		(1, 0, 0, 0, 1, 1),\\
		(1, 0, 1, 1, 0, 0),\\
		(1, 1, 0, 1, 0, 1),\\
		(1, 1, 1, 0, 1, 0).
		\subcaption{$P_{6,10}$, $f=14$}\label{fig:P610}
	\end{minipage}
	\caption{Examples of $d$-dimensional 2-neighborly 0/1-polytopes $P_{d,v}$ with the minimal number of facets $f$ for the given number of vertices $v$.}\label{fig:01}
\end{figure}

In~\cite{Fukuda:2013}, there are enumerated all combinatorial types of 5-polytopes with $\le 9$ vertices (all face lattices available at \url{http://www-imai.is.s.u-tokyo.ac.jp/~hmiyata/oriented_matroids/}). For 2-neighborly 5-polytopes with 8 vertices, the minimal number of facets is 12 and it is attained on the 0/1-polytope on the fig.~\ref{fig:P58}.
The minimal number of facets of a 2-neighborly 5-polytope with 9 vertices is equal to 16 and the polytope also can be realized as a 0/1-polytope (see fig.~\ref{fig:P59}).

By enumerating minimal reduced Gale diagrams for 2-neighborly $d$-polytopes with $d+3$ vertices, one can conclude~\cite{Maksimenko:2018} that $\mn(d,d+3) = d+7$ 
and the lower bound is attained on $P_{5,8}$ (see fig.~\ref{fig:P58}).

In~\cite{Aichholzer:2000}, O.~Aichholzer enumerated all 2-neighborly 0/1-polytopes of dimension~$\le 6$. For dimension 5, except the examples listed in fig.~\ref{fig:01}, there is a polytope with 10 vertices and 22 facets. For dimension 6, if the number of vertices is 10, then the minimum number of facets for such polytopes is equal to 14~(see fig.~\ref{fig:P610}). For the other numbers of vertices 11, 12, and 13, minimum numbers of facets equal 17, 21, and 26.

All the mentioned results are summarized in Table~\ref{tab:1}.
\begin{table}
	\begin{tabular}{|c|c|c|c|c|c}
		\hline
		\diagbox{$d$}{$v$} & $d+2$ & $d+3$ & $d+4$ & $d+5$ & \dots \\ 
		\hline
		4 & \multirow{4}[b 7]*{\begin{sideways}$d+5$\end{sideways}} &    \multicolumn{4}{c}{$v(v-3)/2$} \vphantom{$\dfrac12$}\\ 
		\cline{1-1}  \cline{3-6}  
		5 &  & \multirow{3}[b 4]*{\begin{sideways}$d+7$\end{sideways}} & 16 & $\le 22$ & \vphantom{$\dfrac12$}\\ 
		\cline{1-1}  \cline{4-5}  
		6 &  &  & $\le 14$ & $\le 17$ & \vphantom{$\dfrac12$}\\ 
		\cline{1-1}  \cline{4-5}  
		$\vdots$ & & & \multicolumn{3}{c}{} \\
	\end{tabular} 
	\\[1ex]
	\caption{The minimal number of facets of a 2-neigh\-bor\-ly $d$-po\-ly\-tope with $v$ vertices.}
	\label{tab:1}
\end{table}

It is natural to try to construct examples of a 2-neighborly $d$-polytopes with as small as possible difference between facets and vertices. Some progress can be done with a \emph{join} of two polytopes~\cite{Henk:2004}:
\[
P * P' := \conv\left(\Set*{(x, 0, 0) \in \R^{d+d'+1} \given x \in P} \cup 
                     \Set*{(0, y, 1) \in \R^{d+d'+1} \given y \in P'}\right),
\]
where $P$ is a $d$-polytope and $P'$ is a $d'$-polytope.
The polytope $P * P'$ has dimension $d+d'+1$, $f_0(P) + f_0(P')$ vertices, and $f_{d-1}(P) + f_{d'-1}(P')$ facets~\cite[Sec.~15.1.3]{Henk:2004}. 
Moreover, if $P$ and $P'$ are $k$-neighborly, then $P * P'$ is also $k$-neighborly.

Let $P_n^0$ be a 2-neighborly 4-polytope with $n$ vertices, $n \ge 5$, and let $P_n^1 = P_n^0 * P_n^0$.
Hence $P_n^1$ is a 2-neighborly 9-polytope with $2n$ vertices and $n(n-3)$ facets.
Let us define $P_n^m$ recursively:
\[
  P_n^{m+1} = P_n^m * P_n^m, \quad m \in \N.
\]
Thus $P_n^m$ is a 2-neighborly $d$-polytope with
\[
 d = 5 \cdot 2^m - 1, \quad f_0(P_n^m) = 2^m n, \quad f_{d-1}(P_n^m) = 2^{m-1} n (n-3).
\]
Therefore,
\[
 f_{d-1}(P_n^m) - f_0(P_n^m) = \frac{f_0(P_n^m) (f_0(P_n^m) - d - 1)}{2^{m+1}} < \frac{f_0(P_n^m) (f_0(P_n^m) - d - 1)}{0.4 d}.
\]
This difference has a bit better assymptotic than~\eqref{eq:d4}.

%
%

\section{Small dimensions}
\label{sec:smalldim}

A polytope $P$ is called \emph{$m$-simplicial} if every $m$-face of $P$ is a simplex. A polytope is called \emph{$m$-simple} if it is dual to an $m$-simplicial polytope.
In this section we use the well known fact that a $k$-neighborly polytope is $(2k - 1)$-simplicial~\cite[Sec.~7.1]{Grunbaum:2003}.
The set of all $i$-faces of a $d$-polytope $P$ we denote by $F_i(P)$, $i = 0, 1, \dots, d-1$.

The two statements below (Lemma~\ref{lem:SimplexBound} and Theorem~\ref{thm:3k-1}) are generalized versions of the results from~\cite{Maksimenko:2010}.

\begin{lemma}
\label{lem:SimplexBound}
Let $P$ be an $m$-simplicial $d$-polytope and $m \ge d/2$.
Then $f_{m}(P) \ge f_{d-m-1}(P)$ and the equality is attained only
if $P$ is $m$-simple.
\end{lemma}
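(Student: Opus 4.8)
The plan is to set $k := d - m - 1$ and to double-count the incidences (flags) $(F,G)$ in which a $k$-face $F$ of $P$ is contained in an $m$-face $G$ of $P$. The hypothesis $m \ge d/2$ enters precisely here: it is equivalent to $k \le m-1$, so that $k < m$ and the flags are nondegenerate (and $k \ge 0$ since $m \le d-1$), and it makes $d - m - 1 = k$ the dimension appearing in the claim. First I would count these flags from above. Since $P$ is $m$-simplicial, every $m$-face $G$ is an $m$-simplex and therefore contains exactly $\binom{m+1}{k+1}$ faces of dimension $k$; hence the number of flags equals $\binom{m+1}{k+1}\,f_m(P)$.

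Next I would count the same flags from below as $\sum_{F \in F_k(P)} N(F)$, where $N(F)$ denotes the number of $m$-faces of $P$ that contain $F$. For a fixed $k$-face $F$, the faces of $P$ containing $F$ form the \emph{face figure} $P/F$ (see~\cite{Grunbaum:2003}), a polytope of dimension $d - k - 1 = m$; under this correspondence the $m$-faces of $P$ containing $F$ are exactly the $(m-k-1)$-faces of $P/F$, so $N(F) = f_{m-k-1}(P/F)$. The key input is the classical lower bound on the number of faces of a polytope: every $m$-polytope $Q$ satisfies $f_j(Q) \ge \binom{m+1}{j+1}$, with equality if and only if $Q$ is a simplex. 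Applying it with $j = m-k-1$ gives $N(F) \ge \binom{m+1}{m-k} = \binom{m+1}{k+1}$, whence
\[
\binom{m+1}{k+1}\,f_m(P) = \sum_{F \in F_k(P)} N(F) \ge \binom{m+1}{k+1}\,f_k(P),
\]
and dividing by $\binom{m+1}{k+1}$ yields $f_m(P) \ge f_k(P) = f_{d-m-1}(P)$.

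For the equality statement, I would argue that $f_m(P) = f_{d-m-1}(P)$ forces $N(F) = \binom{m+1}{k+1}$ for every $k$-face $F$, i.e. every face figure $P/F$ is a simplex. Dualizing, the $k$-faces of $P$ correspond to the $m$-faces of $P^*$, and the $m$-face of $P^*$ dual to $F$ has face lattice isomorphic to that of $(P/F)^*$; since a simplex is self-dual, all of these $m$-faces of $P^*$ are simplices, i.e. $P^*$ is $m$-simplicial, which is exactly the assertion that $P$ is $m$-simple.

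The hard part will be the classical lower bound $f_j(Q) \ge \binom{m+1}{j+1}$ together with its equality case for the single intermediate value $j = m-k-1$; the remainder is routine bookkeeping with face figures and duality. I would either cite this bound directly or establish it by induction on the dimension via vertex figures, taking care that equality at one value of $j$ already forces $Q$ to be a simplex.
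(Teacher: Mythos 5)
Your proof is correct, but it takes a genuinely different route from the paper's. The paper never compares the dimensions $d-m-1$ and $m$ directly: it chains the consecutive-level incidence inequalities $(d-i+1)\,f_{i-1}(P) \le (i+1)\,f_i(P)$, valid for $0 < i \le m$ because $i$-faces are simplices, and telescopes them outward from the middle of the $f$-vector, with a parity split (even $d$ done explicitly, odd $d$ dispatched ``by analogy'' via duality and the fact that the simplex minimizes all face numbers). You instead make a single double count of incidences between $k$-faces and $m$-faces, $k = d-m-1$, bounding the number of $m$-faces through a fixed $k$-face $F$ from below by $f_{m-k-1}(P/F) \ge \binom{m+1}{k+1}$, i.e.\ by the classical simplex lower bound applied to the face figure $P/F$. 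What your version buys: it is uniform in the parity of $d$, and the equality analysis becomes transparent, since equality forces every face figure $P/F$ at a $k$-face to be a simplex, which under the inclusion-reversing correspondence between faces of $P$ and faces of $P^{*}$ says exactly that every $m$-face of $P^{*}$ is a simplex, i.e.\ that $P$ is $m$-simple in the paper's sense. What it costs: the paper's consecutive-level inequalities need only the trivial fact that an $(i-1)$-face lies in at least $d-i+1$ $i$-faces, whereas you need the \emph{strict} form of the simplex bound --- that $f_j(Q) = \binom{m+1}{j+1}$ at a single intermediate $j$ already forces the $m$-polytope $Q$ to be a simplex. That statement is true and classical, but it is the one place requiring care: the vertex-figure induction you propose handles cleanly the case where some vertex figure is not a simplex, but when every vertex figure of $Q$ is a simplex you must pass to the simplicial dual and invoke that a simple and simplicial $d$-polytope with $d \ge 3$ is a simplex (Gr\"unbaum, Exercise 4.8.11, which the paper also uses elsewhere). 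Since the paper itself cites Gr\"unbaum's exercise on the minimality of the simplex for its odd-$d$ case, your key ingredient is in the same spirit, just used in a stronger, equality-characterizing form; you should either verify that the cited source includes the uniqueness statement or include the short induction.
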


\begin{proof}
Let us count incidences between $i$-faces and $(i-1)$-faces of $P$, $0 < i \le m$.
Note that every $(i-1)$-face of a $d$-polytope is incident with at least $(d-i+1)$ $i$-faces.
Hence,
\[
(d-i+1) f_{i-1} (P) \le \hspace{-0.5em} \sum_{p \in F_i(P)} \hspace{-0.5em} f_{i-1}(p), 
\]	
 where 
$f_{i-1}(p)$ is the number of $(i-1)$-faces of~$p$.
Since $i$-faces are simplices,
\begin{equation}
	\label{eq:Incidence}
  (d-i+1) f_{i-1}(P) \le (i+1) f_i(P) \qquad \text{for } 0 < i \le m.
\end{equation}
Note that the equality here is fulfilled if and only if every $(i-1)$-face of $P$ is incident to exactly $(d-i+1)$ $i$-faces, i.e. $P$ is $(d-i)$-simple.

Let $d$ be even, $d = 2n$, $n\in\N$. Let $i = n$. 
From inequality \eqref{eq:Incidence}, we get
\begin{equation}
	\label{eq:IncidFirst}
 (n+1) f_{n-1}(P) \le (n+1) f_n(P).
\end{equation}
Suppose that $m > n$. 
Thus, substituting $i \in \{n-1, n+1\}$ in \eqref{eq:Incidence}, we obtain
\[
 (n+2) f_{n-2}(P) \le n f_{n-1}(P) \quad \text{and} \quad
 n f_{n}(P) \le (n+2) f_{n+1}(P).
\]
Combining this with \eqref{eq:IncidFirst}, we have
\[
 f_{n-2}(P) \le f_{n+1}(P).
\]
 By repeating this procedure, it is easy to get
\[
 f_{d-m-1}(P) \le f_{m}(P) \quad \text{for even $d$.}
\]
Moreover, the equality is attained only on $m$-simple polytopes.

The case $d = 2n+1$, $n\in\N$, are proved by analogy. (By using the duality and the fact that a $d$-simplex has the minimal number of faces among all $d$-polytopes~\cite[p.~36, Ex.~8]{Grunbaum:2003}.)
\end{proof}

\begin{theorem}
\label{thm:3k-1}
Let $P$ be a $k$-neighborly $d$-polytope and $2k < d \le 3k - 1$.
Then $f_{d-1}(P) \ge f_{0}(P)$ and the equality is attained only if $P$ is a simplex.
\end{theorem}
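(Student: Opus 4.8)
My plan is to pin down $f_{d-1}(P)$ by squeezing the number of ridges $f_{d-2}(P)$ between two bounds. The upper bound is a general fact about polytopes: the intersection of two distinct facets is a single (possibly empty) face of dimension at most $d-2$, so two facets have at most one common ridge, while each ridge lies in exactly two facets. Assigning to every ridge the unordered pair of facets through it is therefore injective, which gives
\[
f_{d-2}(P) \le \binom{f_{d-1}(P)}{2}.
\]
If I can pair this with the lower bound $f_{d-2}(P) \ge \binom{v}{2} = f_1(P)$, where $v = f_0(P)$, then $\binom{v}{2} \le \binom{f_{d-1}(P)}{2}$ yields $f_{d-1}(P) \ge v$ immediately. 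Both inequalities are tight for the simplex, where $f_{d-2} = \binom{d+1}{2} = \binom{v}{2}$ and $f_{d-1} = d+1 = v$, which is precisely the predicted equality case.

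The lower bound $f_{d-2}(P) \ge \binom{v}{2}$ is where the hypotheses must be used. Since $P$ is $k$-neighborly it is $(2k-1)$-simplicial, and $2k-1 \ge d/2$ because $d \le 3k-1 \le 4k-2$. In the boundary case $d = 2k+1$ the ridges are themselves $(2k-1)$-faces, hence simplices, and Lemma~\ref{lem:SimplexBound} applied with $m = 2k-1 = d-2$ gives $f_{d-2}(P) \ge f_1(P) = \binom{v}{2}$ directly. For the remaining dimensions $2k+1 < d \le 3k-1$ the ridges need not be simplices, so the Lemma no longer compares $f_{d-2}$ with $f_1$ and the incidence chain \eqref{eq:Incidence} only reaches dimension $2k-1$. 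This is the main obstacle. I would try to recover $f_{d-2}(P) \ge \binom{v}{2}$ by exploiting the simpliciality as far as it goes — in particular the whole $(d-2k)$-skeleton is complete, $f_{d-2k}(P) = \binom{v}{d-2k+1}$ — and then bridging the levels $2k,\dots,d-2$ by an auxiliary device, for instance an induction through vertex figures: each $P/u$ is a $(k-1)$-neighborly, $(2k-2)$-simplicial $(d-1)$-polytope on $v-1$ vertices, and one can try to sum the resulting ridge estimates $f_{d-3}(P/u)$ over the vertices $u$. A crude such sum already comes close to $\binom{v}{2}$, and the difficulty is sharpening it enough to clear that target.

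For the equality statement I would trace the two displayed inequalities back. The equality $f_{d-1}(P) = v$ forces $f_{d-2}(P) = \binom{v}{2}$, so the lower-bound step is tight; by the equality clause of Lemma~\ref{lem:SimplexBound} this makes $P$ a $(2k-1)$-simple polytope. It also forces the ridge-to-pair assignment to be onto, i.e. every two facets share a ridge, which says the dual $P^{*}$ has a complete $1$-skeleton and is thus $2$-neighborly; combined with $(2k-1)$-simpleness of $P$, the dual $P^{*}$ is a $(2k-1)$-simplicial, $2$-neighborly $d$-polytope with $f_0(P^{*}) = f_{d-1}(P^{*}) = v$. Feeding $P^{*}$ back into the same squeeze should then force $v = d+1$, so that $P$ is a simplex. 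I expect the self-dual tightness of the simplex to make this final deduction clean once the ridge lower bound is available, so the whole argument rests on overcoming the obstacle in the preceding paragraph.
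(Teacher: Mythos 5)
Your squeeze at the level of vertex pairs versus facet pairs has a genuine gap exactly where you locate it, and it is not a small one: the lower bound $f_{d-2}(P)\ge\binom{v}{2}$ is only available when the ridges are simplices, i.e.\ when $d-2\le 2k-1$, which confines your argument to the single boundary dimension $d=2k+1$. For the rest of the range $2k+1<d\le 3k-1$ you offer only a hoped-for repair (vertex-figure induction, summing ridge estimates over vertices), and you yourself note that the crude sum falls short of $\binom{v}{2}$; the equality analysis then rests on the same unproven bound, and its final step (``feeding $P^{*}$ back into the same squeeze should then force $v=d+1$'') is not justified. So as written the proposal proves the theorem only for $d=2k+1$.

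The missing idea is not a sharper ridge count but a different choice of levels: compare $f_{k-1}$ with $f_{d-k}$ instead of $f_1$ with $f_{d-2}$. Neighborliness gives $f_{k-1}(P)=\binom{f_0(P)}{k}$; dually, $k$ distinct facets have at most one common $(d-k)$-face while every $(d-k)$-face lies in at least $k$ facets, so $f_{d-k}(P)\le\binom{f_{d-1}(P)}{k}$ --- the $k$-subset analogue of your ridge-to-pair injection, with equality only for dual $k$-neighborly $P$. The bridge $f_{k-1}(P)\le f_{d-k}(P)$ comes from Lemma~\ref{lem:SimplexBound} applied with $m=d-k$: this requires all $(d-k)$-faces to be simplices, i.e.\ $d-k\le 2k-1$, which is \emph{precisely} the hypothesis $d\le 3k-1$ (and the condition $m\ge d/2$ is exactly $d\ge 2k$). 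In other words, the theorem's dimension range is calibrated to the level-$k$ comparison, not to the level-$2$ one, which is why your version cannot reach $3k-1$. Chaining the three relations gives $\binom{f_0}{k}\le\binom{f_{d-1}}{k}$, hence $f_0\le f_{d-1}$. For equality, $f_{d-k}=\binom{f_{d-1}}{k}$ forces $P$ to be dual $k$-neighborly, hence $(2k-1)$-simple; since $P$ is also $(2k-1)$-simplicial and $(2k-1)+(2k-1)>d$ in this range, $P$ is a simplex by Exercise~4.8.11 of~\cite{Grunbaum:2003} --- no second pass through the squeeze is needed. If you replace pairs by $k$-subsets throughout your outline, it becomes the paper's proof.
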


\begin{proof}
Recall that every $i$-face of a $k$-neigh\-bor\-ly $d$-polytope $P$ is a simplex for $i < 2k$~\cite[Sec.~7.1]{Grunbaum:2003}.
Using Lemma~\ref{lem:SimplexBound}, we get
\begin{equation}
	\label{eq:SimplexConcl}
 f_{d-k}(P) \ge f_{k-1}(P).
\end{equation}

Note also that for a $k$-neigh\-bor\-ly $d$-polytope $P$ we can use the implication
\begin{equation}
	\label{eq:Implication}
  f_{k-1}(P) \le f_{d-k}(P) \Rightarrow f_0(P) \le f_{d-1}(P).
\end{equation}
Indeed,  $f_{k-1}(P) = \binom{f_0(P)}{k}$ for a $k$-neighborly polytope $P$. But $f_{d-k}(P) \le \binom{f_{d-1}(P)}{k}$ for any $d$-polytope $P$ and the equality is attained only if $P$ is dual $k$-neighborly.
Finally note that 
\[
\binom{f_0(P)}{k} \le \binom{f_{d-1}(P)}{k} \Rightarrow f_0(P) \le f_{d-1}(P).
\] 

Combining \eqref{eq:SimplexConcl} and \eqref{eq:Implication}, we obtain
\[
  f_0(P) \le f_{d-1}(P) \quad \text{for $d \le 3k - 1$}.
\]	
Moreover, the equality is attained only if $P$ is dual $k$-neighborly. Hence, it is $(2k-1)$-simplicial and $(2k-1)$-simple, and must be a simplex, since $2k-1 + 2k-1 > d$~\cite[Exercise~4.8.11]{Grunbaum:2003}.
\end{proof}

\subsection{Dimension 5}

\begin{lemma}
\label{lem:2n5}
Let $P$ be a $2$-neighborly $5$-polytope.
If $v \in \vertices P$ and $f_P(v)$ is the number of facets incident with $v$, then $f_P(v) \ge |\vertices P| - 1$.
The equality $f_P(v) = |\vertices P| - 1$ is valid for all $v \in \vertices P$ if and only if $P$ is 2-simple.
\end{lemma}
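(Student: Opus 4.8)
The plan is to pass to the vertex figure $P/v$, a 4-polytope, and to read off the claim from its face numbers. First I would record the standard correspondence between the $i$-faces of $P/v$ and the $(i+1)$-faces of $P$ containing $v$. Writing $n = |\vertices P|$, two-neighborliness forces $v$ to span an edge with each of the other $n-1$ vertices, so $P/v$ has exactly $n-1$ vertices; dually, the facets of the 4-polytope $P/v$ correspond to the facets of $P$ through $v$, whence $f_3(P/v) = f_P(v)$. Thus the inequality $f_P(v) \ge n-1$ is precisely $f_3(P/v) \ge f_0(P/v)$.

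Next I would feed this into Euler's relation for the 4-polytope $P/v$, namely $f_0 - f_1 + f_2 - f_3 = 0$, which rewrites the goal as $f_3(P/v) = f_0(P/v) + \bigl(f_2(P/v) - f_1(P/v)\bigr)$; so it suffices to show $f_2(P/v) \ge f_1(P/v)$. Here I would use that $P$, being 2-neighborly, is 3-simplicial, so each 3-face of $P$ is a tetrahedron and therefore each 2-face of $P/v$ is a triangle, i.e. $P/v$ is 2-simplicial. Applying the incidence inequality \eqref{eq:Incidence} of Lemma~\ref{lem:SimplexBound} to $P/v$ with $d = 4$, $m = 2$, $i = 2$ yields $3 f_1(P/v) \le 3 f_2(P/v)$, that is $f_1(P/v) \le f_2(P/v)$, which gives the bound.

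For the equality statement I would trace when $f_2(P/v) = f_1(P/v)$. By the equality clause of \eqref{eq:Incidence}, this holds iff every edge of $P/v$ lies in exactly three 2-faces of $P/v$; unwinding the face correspondence, this means every 2-face of $P$ through $v$ is contained in exactly three 3-faces of $P$ (each of which then contains $v$ automatically). Hence $f_P(v) = n-1$ for all $v$ is equivalent to: for every vertex $v$, each 2-face through $v$ sits in exactly three 3-faces. Since every 2-face of $P$ has at least one vertex, letting $v$ range over $\vertices P$ shows this is the same as requiring every 2-face of $P$ to lie in exactly three 3-faces, which by the characterization in Lemma~\ref{lem:SimplexBound} (the case $d = 5$, $i = 3$, so $(d-i)$-simple with $d-i = 2$) is exactly the condition that $P$ be 2-simple.

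The two incidence-and-Euler computations are routine once $P/v$ is seen to be 2-simplicial. The step demanding the most care is the equality analysis: I must check that the purely local condition ``every 2-face through $v$ lies in three 3-faces,'' accumulated over all $v$, is genuinely equivalent to the global 2-simplicity of $P$, and that this agrees with the $(d-i)$-simple characterization extracted from Lemma~\ref{lem:SimplexBound}.
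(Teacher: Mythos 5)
Your proposal is correct and follows essentially the same route as the paper: pass to the vertex figure at $v$ (a $4$-polytope with $n-1$ vertices and $f_P(v)$ facets, $2$-simplicial because $P$ is $3$-simplicial), bound $f_1 \le f_2$ by the edge--triangle incidence count, and conclude via Euler's relation, with equality tracked through the incidence count. The only difference is cosmetic: you cite \eqref{eq:Incidence} instead of redoing the count inline, and you spell out the local-to-global step (each vertex figure $2$-simple $\Leftrightarrow$ $P$ is $2$-simple) that the paper leaves implicit --- a welcome bit of extra care.
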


\begin{proof}
Let $P$ be a $2$-neighborly $5$-polytope and $v \in \vertices P$.
Let $Q$ be a \emph{vertex figure of $P$ at $v$}, 
i.e. $Q$ is the intersection of $P$ by a hyperplane which strictly
separates $v$ from $\vertices P \setminus \{v\}$~\cite{Grunbaum:2003}.
Thus, $Q$ is a $4$-polytope and $f_P(v) = f_3(Q)$.
On the other hand, $f_0(Q) = |\vertices P| - 1$, since $P$ is 2-neighborly.

All $3$-faces of $P$ are simplices. Hence, all $2$-faces of $Q$ are triangles.
Note that every $1$-face of a $4$-polytope is incident with at least three $2$-faces, and if there are exactly three $2$-faces for every $1$-face, then the polytope is $2$-simple.
By counting incidences between $1$-faces and $2$-faces of $Q$, we have 
\[
 f_2(Q) * 3 \ge f_1(Q) * 3,
\]
and the equality is attained only if $Q$ is $2$-simple.
From Euler's equation~\cite[Sec.~8.1]{Grunbaum:2003}, we get
\[
 f_3(Q) - f_0(Q) = f_2(Q) - f_1(Q) \ge 0.
\]
\end{proof}

Now we want to obtain some lower bound for $f_4(P)$.
By counting incidences between vertices and facets of a 2-neighborly 5-polytope $P$ and using Lemma~\ref{lem:2n5}, we get:
\[
	\sum_{p \in F_4(P)} \hspace{-0.5em} f_0(p) \ge f_0(P) (f_0(P) - 1).
\]
Let $\hat{x}$ be the average number of vertices in a facet of $P$:
\[
	\hat{x} = \frac{1}{f_4(P)} \sum_{p \in F_4(P)} \hspace{-0.5em} f_0(p).
\]
Hence,
\begin{equation}
\label{eq:f_4(P)}
f_4(P) \ge \frac{f_0(P) (f_0(P) - 1)}{\hat{x}}.
\end{equation}
On the other hand, the number of facets of $P$ must be greater than 
the number of facets of a facet of $P$. 
Since every facet of $P$ is a $2$-neighborly $4$-polytope, 
\[
f_4(P) \ge \frac{\lceil \hat{x}\rceil(\lceil \hat{x}\rceil - 3)}{2} + 1.
\]
Hence,
\[
f_4(P) \ge \max \left\{\frac{f_0(P) (f_0(P) - 1)}{\hat{x}},  \frac{\lceil \hat{x}\rceil(\lceil \hat{x}\rceil - 3)}{2} + 1\right\}.
\]
Therefore, for large $f_0$ we have
\[
f_4 = \Omega(f_0^{4/3}).
\] 


Another question raised by Lemma~\ref{lem:2n5} is as follows. Is there a 2-neighborly 2-simple 5-polytope other than a simplex? Obviously, every vertex figure of such polytope is a 2-simplicial 2-simple 4-polytope. Recently, Brinkmann and Ziegler~\cite{Brinkmann:2017} established that there are only seven 2s2s 4-po\-ly\-to\-pes with at most 12 vertices. By using this list we can proof the following.

\begin{theorem}
\label{thm:5d}
If $P$ is a 2-neighborly 2-simple 5-polytope other than a simplex,
then $f_0(P) \ge 14$.
\end{theorem}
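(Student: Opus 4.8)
The plan is to reduce the statement to the Brinkmann--Ziegler classification of small 2-simplicial 2-simple (``2s2s'') 4-polytopes by passing to vertex figures. Fix a vertex $v \in \vertices P$ and let $Q$ be the vertex figure of $P$ at $v$, which is a 4-polytope. First I would record three properties of $Q$. Since $P$ is 2-neighborly, every vertex $w \ne v$ is joined to $v$ by an edge, so $f_0(Q) = |\vertices P| - 1$. Since $P$ is 2-neighborly it is 3-simplicial, so its 3-faces are simplices; the 2-faces of $Q$ are the vertex figures of those 3-faces and hence triangles, i.e. $Q$ is 2-simplicial (this is exactly the observation used in the proof of Lemma~\ref{lem:2n5}). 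Finally, because $P$ is 2-simple every 2-face of $P$ lies in exactly three facets, and passing to the quotient at $v$ this says that every edge of $Q$ lies in exactly three 2-faces, so $Q$ is 2-simple. Thus $Q$ is a 2s2s 4-polytope on $|\vertices P|-1$ vertices. Moreover $P$ is not a simplex, so $|\vertices P| \ge d+2 = 7$, whence Lemma~\ref{lem:2n5} gives $f_P(v) = |\vertices P|-1 \ge 6 > 5$; as $f_3(Q)=f_P(v)$, the polytope $Q$ has more than five facets and is not a simplex.

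Next I would invoke the Brinkmann--Ziegler enumeration. Suppose, for contradiction, that $|\vertices P| \le 13$. Then every vertex figure $Q$ is a non-simplex 2s2s 4-polytope with $f_0(Q) = |\vertices P| - 1 \le 12$, so $Q$ occurs in their list of seven 2s2s 4-polytopes with at most twelve vertices. Discarding the simplex leaves six polytopes; the list shows that the smallest non-simplex 2s2s 4-polytope has nine vertices, whence $|\vertices P| - 1 \ge 9$. This already forces $|\vertices P| \ge 10$ and settles the cases $|\vertices P| \in \{7,8,9\}$ at once. It remains to exclude $|\vertices P| \in \{10,11,12,13\}$, i.e. vertex figures on $9,10,11,12$ vertices.

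To attack the remaining cases I would exploit the rigid facet structure of $P$. Since the ridges of $P$ are its 3-faces, which are simplices, every facet $F$ of $P$ is a simplicial 4-polytope; being a face of a 2-neighborly polytope it is itself 2-neighborly, hence a neighborly simplicial 4-polytope, so with $m_F = f_0(F)$ it has $m_F(m_F-3)/2$ ridges and $m_F(m_F-3)$ 2-faces. Counting (facet, vertex) incidences with Lemma~\ref{lem:2n5} gives $\sum_F m_F = |\vertices P|(|\vertices P|-1)$; counting ridges (each in two facets) and 2-faces (each in three facets, by 2-simplicity) gives $\sum_F m_F(m_F-3) = 4 f_3(P) = 3 f_2(P)$; and counting triangles through each vertex gives $3 f_2(P) = \sum_v f_1(Q_v)$. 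Feeding these identities, together with Euler's relation $f_0 - f_1 + f_2 - f_3 + f_4 = 2$ and $f_1(P) = \binom{|\vertices P|}{2}$, into the explicit $f$-vectors $(f_0(Q), e, e, f_0(Q))$ recorded for the six Brinkmann--Ziegler polytopes, I would test each admissible value $|\vertices P| \in \{10,11,12,13\}$ against the resulting divisibility and counting constraints.

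The main obstacle is precisely this last elimination. Being 2s2s is necessary but far from sufficient for a 4-polytope to arise as the vertex figure of a 2-neighborly 2-simple 5-polytope, so the global incidence identities above may not by themselves contradict every candidate. The delicate part is to extract from the explicit face lattices in the Brinkmann--Ziegler classification enough additional gluing information---for instance, that each facet of $Q$ must itself be the vertex figure of a neighborly simplicial 4-polytope, and that these local pictures must be mutually compatible across all vertices of $P$---to rule out the six small types. I expect this verification to be partly case-by-case and to rely directly on the combinatorial data of the classification rather than on the counting identities alone.
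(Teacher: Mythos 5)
Your reduction is exactly the paper's: pass to the vertex figure $Q$ at a vertex $v$, check that $Q$ is a 2-simplicial 2-simple 4-polytope with $f_0(Q)=f_0(P)-1$, invoke the Brinkmann--Ziegler list of the seven 2s2s 4-polytopes with at most 12 vertices, and note that since the smallest non-simplex entry has 9 vertices the cases $f_0(P)\in\{7,8,9\}$ are vacuous. Up to there you are correct. But the heart of the theorem is precisely the elimination of $f_0(P)\in\{10,11,12,13\}$, and this you leave as a plan (``I would test each admissible value \dots against the resulting divisibility and counting constraints''), while explicitly doubting that counting alone suffices and suggesting that extra ``gluing information'' from the face lattices may be needed. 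A proof that ends with an untested plan for the decisive cases, together with a stated worry that the plan may fail, has a genuine gap: the four remaining cases are simply not proved.

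What you are missing is that the counting identities you already wrote down do close all four cases, with no further lattice data, because all vertex figures of $P$ have the same number of vertices and the classification then pins down the relevant face numbers of $Q$; each case dies on a one-line divisibility check. If $f_0(Q)=9$ then $f_1(Q)=26$, so each vertex of $P$ lies on 26 triangular 2-faces, whence $3f_2(P)=26\cdot 10$, which is not divisible by 3. If $f_0(Q)=10$ then $f_2(Q)=30$, so each vertex of $P$ lies on 30 tetrahedral 3-faces, whence $4f_3(P)=30\cdot 11$, not divisible by 4. If $f_0(Q)=11$ then $Q$ has exactly one facet with 7 vertices, so every vertex of $P$ lies in exactly one facet with 8 vertices; these facets would then partition the 12 vertices of $P$ into blocks of size 8, impossible. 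If $f_0(Q)=12$ then $f_2(Q)=39$, whence $4f_3(P)=39\cdot 13$, again not divisible by 4. So your framework is the right one; the gap is that you never plugged the actual invariants from the Brinkmann--Ziegler list into your incidence counts, which is where the entire content of the proof lies.
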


\begin{proof}
Let $Q$ be a vertex figure of $P$ at $v$, $v \in \vertices P$. Consequently, $Q$ is a 2-simple 2-simplicial 4-polytope with $f_0(Q) = f_0(P) - 1$. 
Below we use the list of all such polytopes with $f_0(Q) \le 12$ from~\cite[Theorem~2.1]{Brinkmann:2017}.

If $f_0(Q) = 9$, then $f_1(Q) = 26$. Hence, every vertex in $P$ is incident to 26 2-faces of $P$. On the other hand, every 2-face of $P$ is a triangle. Therefore, $26 f_0(P)$ must be equal to $3 f_2(P)$. But $f_0(P) = 10$ and $f_2(P)$ is not integer.

If $f_0(Q) = 10$, then $f_2(Q) = 30$. Hence, every vertex in $P$ is incident to 30 3-faces of $P$. On the other hand, every 3-face of $P$ is a simplex. Therefore, $30 f_0(P)$ must be equal to $4 f_3(P)$. But $f_0(P) = 11$ and $f_3(P)$ is not integer.

If $f_0(Q) = 11$, then $Q$ has one facet with 7 vertices. Hence, every vertex in $P$ is incident to one facet with 8 vertices. But $P$ has 12 vertices.

If $f_0(Q) = 12$, then $f_2(Q) = 39$. Hence, every vertex in $P$ is incident to 39 3-faces of $P$. On the other hand, every 3-face of $P$ is a simplex. Therefore, $39 f_0(P)$ must be equal to $4 f_3(P)$. But $f_0(P) = 13$ and $f_3(P)$ is not integer.
\end{proof}

\subsection{Dimension 6}

In~\cite{Maksimenko:2010}, it was showed that $f_{5}(P) \ge f_{0}(P)$ for a 2-neighborly 6-polytope. Now we can update this result.

\begin{theorem}
	Let $P$ be a $2$-neighborly $6$-polytope. Then $f_{5}(P) \ge f_{0}(P)$.
	The equality $f_{5}(P) = f_{0}(P)$ holds if and only if $P$ is a simplex or $P$ is a dual 2-neighborly polytope with $f_{0}(P) \ge 27$.
\end{theorem}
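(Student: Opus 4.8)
The plan is to treat the inequality as essentially known and to put all the work into the equality case, tracking the conditions that force it. For the inequality $f_5(P)\ge f_0(P)$ I will invoke the main result of~\cite{Maksimenko:2010}, whose mechanism I need anyway: for a $2$-neighborly $6$-polytope one has $f_1(P)\le f_4(P)$, and then the implication~\eqref{eq:Implication} (with $k=2$, $d=6$) yields $f_0(P)\le f_5(P)$. The simplex clearly gives equality, so from now on I assume $f_5(P)=f_0(P)$ and $P$ is not a simplex, and I aim to show $P$ is dual $2$-neighborly with $f_0(P)\ge 27$. The first step is cheap: since $f_1(P)=\binom{f_0}{2}$ and, for any $6$-polytope, $f_4(P)\le\binom{f_5}{2}$ with equality exactly when $P$ is dual $2$-neighborly, the chain $\binom{f_0}{2}=f_1\le f_4\le\binom{f_5}{2}=\binom{f_0}{2}$ collapses. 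Hence $f_4=f_1=\binom{f_0}{2}$ and $P$ is dual $2$-neighborly.

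Next I extract the combinatorial type of $P$. From $f_0=f_5$, $f_1=f_4$, and Euler's relation $f_0-f_1+f_2-f_3+f_4-f_5=0$ I get $f_2=f_3$. Now $P$ is $2$-neighborly, hence $3$-simplicial~\cite[Sec.~7.1]{Grunbaum:2003}, so every $2$-face is a triangle and every $3$-face a tetrahedron. Double-counting incident pairs of $2$- and $3$-faces (each tetrahedral $3$-face carries exactly four $2$-faces, while each $2$-face $G$ lies in at least four $3$-faces because $P/G$ is a $3$-polytope) gives $f_3\ge f_2$, with equality iff every $2$-face lies in exactly four $3$-faces, i.e.\ iff $P$ is $3$-simple. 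Thus $f_2=f_3$ forces $P$ to be $3$-simple, hence $2$-simple; together with $3$-simpliciality (hence $2$-simpliciality) this makes $P$ a $2$-simple $2$-simplicial $2$-neighborly $6$-polytope.

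The third step identifies the relevant $5$-dimensional faces. Each facet $F$ of $P$ is a $2$-neighborly $5$-polytope, and it is also $2$-simple: for a $2$-face $G\subset F$ the quotient $P/G$ is a tetrahedron (by $3$-simpleness of $P$), so the facets of $P$ through $G$ correspond to the triangular facets of $P/G$, and reading off the three vertices of such a triangle shows $G$ lies in exactly three $3$-faces of $F$. If some facet were a simplex then all would be, making $P$ simplicial and, via $2f_4=\sum_F f_4(F)=6f_5$ against $f_4=\binom{f_5}{2}$, forcing $f_0=7$, i.e.\ a simplex; excluded. Hence each facet $F$ is a \emph{non-simplex} $2$-neighborly $2$-simple $5$-polytope, so $f_0(F)\ge 14$ by Theorem~\ref{thm:5d}. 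Dualizing gives the companion fact I need: since $P$ is dual $2$-neighborly, $P^{*}$ is $2$-neighborly, so the vertex figure of $P^{*}$ at the vertex dual to $F$ (which is $F^{*}$) has $f_0(P^{*})-1=f_5(P)-1=v-1$ vertices; that is, $f_4(F)=v-1$ for \emph{every} facet $F$.

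The final, and hardest, step is the quantitative squeeze that produces the sharp constant. Because $F$ is $2$-simple, Lemma~\ref{lem:2n5} holds with equality, so every vertex of $F$ lies in exactly $f_0(F)-1$ facets and the average facet size is $\hat x=f_0(F)(f_0(F)-1)/f_4(F)$; feeding this into~\eqref{eq:f_4(P)} together with the ``a polytope has more facets than any facet'' bound gives a two-sided constraint linking $f_0(F)$ and $f_4(F)=v-1$. Plugging in $f_0(F)\ge 14$ should force $v-1\ge 26$, i.e.\ $v\ge 27$. I expect the naive version of this optimization to fall one short (it comfortably rules out small $v$ but flirts with $v=26$), so the genuine obstacle is sharpening it: the facets of $F$ are themselves $2$-simple $2$-simplicial $2$-neighborly $4$-polytopes, hence drawn from the finite Brinkmann--Ziegler list~\cite{Brinkmann:2017}, and restricting $\hat x$ to the admissible facet sizes from that list is what I expect to pin the bound at exactly $27$. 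For the converse direction I note only that any $2$-neighborly dual $2$-neighborly $6$-polytope automatically satisfies $f_5=f_0$ (apply the inequality to both $P$ and $P^{*}$), so the above shows these are precisely the non-simplex equality cases and that they require $f_0(P)\ge 27$.
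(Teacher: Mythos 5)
Your reduction of the equality case parallels the paper up to a point and is correct as far as it goes: equality collapses the chain $\binom{f_0}{2}=f_1\le f_4\le\binom{f_5}{2}$, so $P$ is dual $2$-neighborly; its facets are $2$-neighborly $2$-simple $5$-polytopes; Theorem~\ref{thm:5d} then gives $f_0(F)\ge 14$; and indeed $f_4(F)=v-1$ for every facet. But the decisive step --- turning $f_0(F)\ge 14$ into $f_4(F)\ge 26$ --- is precisely where your proposal stops being a proof: it is stated as an expectation (``should force'', ``I expect to pin the bound''), and the mechanism you propose for the sharpening rests on a false claim. The facets of $F$ are \emph{not} $2$-simple $2$-simplicial $4$-polytopes: they are $2$-neighborly $4$-polytopes, hence simplicial, and a simplicial $4$-polytope that were also $2$-simple and $2$-neighborly would satisfy $f_1=2f_3=\binom{f_0}{2}$ and $f_0=f_3$, forcing $f_0=5$, i.e.\ a simplex; so your claim would make $F$ simplicial and $2$-simple, contradicting $f_0(F)\ge 14$. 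The objects that do lie in the Brinkmann--Ziegler list are the \emph{vertex figures} $Q(v)$ of $F$, and that is what the paper uses, in a way quite different from your sketch: each $Q(v)$ is a $2$-simple $2$-simplicial $4$-polytope with $f_3(Q)=f_0(Q)=f_0(F)-1\ge 13$; its facets are simplicial $3$-polytopes, so $f_2(Q)=f_3(Q)\,(y(v)-2)$ with $y(v)$ the average facet size of $Q(v)$; Bayer's inequality $f_2(Q)\le f_3(Q)(f_3(Q)+3)/4$ from~\cite{Bayer:1987} gives $y(v)\le (f_0(F)+2)/4+2$; and a Cauchy--Schwarz count of vertex--facet incidences (valid because, by Lemma~\ref{lem:2n5} with equality, every vertex of $F$ lies in exactly $f_0(F)-1$ facets) yields $\hat x\le\hat y+1\le(f_0(F)+14)/4$. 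Plugging this into \eqref{eq:f_4(P)} gives $f_4(F)\ge 4f_0(F)(f_0(F)-1)/(f_0(F)+14)\ge 26$ for $f_0(F)\ge 14$. Note also that your quantitative expectation is off: at $f_0(F)=14$ the naive optimization of \eqref{eq:f_4(P)} against the ``more facets than any facet'' bound allows $f_4(F)=23$ (then $\hat x\approx 7.9$, $\lceil\hat x\rceil=8$, and $23\ge 8\cdot5/2+1$), so it falls three short, not one, and no restriction on admissible \emph{facet} sizes can repair it.

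A smaller but real gap: you assert ``if some facet were a simplex then all would be'' without justification, and your subsequent double count needs \emph{all} facets to be simplices. The implication is true here, but not for any local reason; the clean argument is dual: if a facet $F$ of $P$ were a $5$-simplex, the vertex figure of $P^*$ at the corresponding vertex would be a $5$-simplex, so that vertex would have exactly $6$ neighbours, while $2$-neighborliness of $P^*$ forces adjacency to all $f_0(P^*)-1$ vertices; hence $f_0(P^*)=7$, so $P^*$ and therefore $P$ is a simplex, contrary to assumption. With that fix, and with the paper's vertex-figure/Bayer argument replacing your final step, your outline becomes a complete proof; as written, the key bound $v\ge 27$ is not established.
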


\begin{proof}
Let $f_i := f_i(P)$.
From Euler's equation~\cite{Grunbaum:2003}, we get
\begin{equation*}
	f_0 - f_1 + f_4 - f_5 = f_3 - f_2.
\end{equation*}
Using Lemma~\ref{lem:SimplexBound}, we have $f_3 - f_2 \ge 0$ 
and $f_1 - f_0 \le f_4 - f_5$.
Note also, that $f_1 = f_0 (f_0-1)/2$ and $f_4 \le f_5 (f_5-1)/2$, and the equality is attained only for dual 2-neighborly polytopes.
Therefore, \(f_0 (f_0-3)/2 \le f_5 (f_5-3)/2\) and $f_0 \le f_5$.

Now, let 2-neighborly 6-polytope $P$ be dual 2-neighborly and $P$ is not a simplex. Let $T$ be a facet of $P$. Then $T$ is a 2-neighborly 2-simple 5-polytope other than a simplex. From Theorem~\ref{thm:5d}, we know that every vertex figure of $T$ at $v \in \vertices T$ is a 2-simple 2-simplicial 4-polytope $Q = Q(v)$ with $f_3(Q) = f_0(Q) = f_0(T) - 1 \ge 13$. Every facet $q$ of $Q$ is a simplicial 3-polytope. Hence, $f_2(q) = 2 f_0(q) - 4$ and
\[
2 f_2(Q) = \sum_{q \in F_3(Q)} f_2(q) = \sum_{q \in F_3(Q)} (2 f_0(q) - 4) 
= 2 f_3(Q) (y - 2),
\]
where 
\[
y = y(v) = \frac{1}{f_3(Q)} \sum_{q \in F_3(Q)} f_0(q)
\]
is the average number of vertices in a facet of $Q = Q(v)$.
On the other hand~\cite[Theorem~2(prop.~3)]{Bayer:1987},
\[
f_2(Q) \le f_3(Q) (f_3(Q) + 3) / 4.
\]
Therefore,
\begin{equation*}
y(v) \le \frac{f_3(Q) + 3}{4} + 2 = \frac{f_0(Q) + 3}{4} + 2 = \frac{f_0(T) + 2}{4} + 2.
\end{equation*}

Let $\hat{x}$ be the average number of vertices in a facet of the 5-polytope $T$:
\[
	\hat{x} = \frac{1}{f_4(T)} \sum_{t \in F_4(T)} \hspace{-0.5em} f_0(t).
\]
and 
\[
	\hat{y} = \frac{1}{f_0(T)} \sum_{v \in F_0(T)} y(v) \le \frac{f_0(T) + 2}{4} + 2.
\]
Now we are going to show that 
\[
\hat{x} \le \hat{y} + 1
\]
and then use inequality~\eqref{eq:f_4(P)}.

Let $F_4(v)$ be the set of facets of $T$ incident to the vertex $v$:
\[
F_4(v) = \Set*{t \in F_4(T) \given v \in t}.
\]
Note, that 
\[
\sum_{v \in F_0(T)} \sum_{t \in F_4(v)} f_0(t) = \sum_{v \in F_0(T)} \sum_{q \in F_3(Q(v))} (f_0(q) + 1) = \sum_{v \in F_0(T)} |F_4(v)| (y(v) + 1)
\]
and $|F_4(v)| = f_0(T) - 1$.
Let $n = f_4(T)$ and $x_i$ be the number of vertices in $i$-th facet of $T$, $i \in [n]$.
Thus,
\[
\sum_{v \in F_0(T)} \sum_{t \in F_4(v)} f_0(t) = \sum_{i = 1}^n x^2_i
\]
and
\[
\sum_{v \in F_0(T)} |F_4(v)| (y(v) + 1) = (\hat{y} + 1)\sum_{i = 1}^n x_i.
\]
Since
\[
\left(\sum_{i = 1}^n \frac{x_i}{n}\right)^2 \le \sum_{i = 1}^n \frac{x^2_i}{n},
\]
we get
\[
\sum_{i = 1}^n \frac{x_i}{n} \le \hat{y} + 1.
\]
But $\hat{x} = \sum_{i = 1}^n \frac{x_i}{n}$ and $\hat{y} \le \frac{f_0(T) + 2}{4} + 2$.
Consequently,
\[
\hat{x} \le \frac{f_0(T) + 2}{4} + 3.
\]
From inequality~\eqref{eq:f_4(P)}, we have
\[
f_4(T) \ge \frac{4 f_0 (f_0 - 1)}{f_0 + 14}.
\]
Thus, $f_4(T) \ge 26$ for $f_0 \ge 14$.
Hence, $f_5(P) \ge 27$.
\end{proof}

%
%

\section{Simplicial polytopes and polytopes with small number of vertices}
\label{sec:smallvert}

The tight lower bound for the number of $j$-faces of a simplicial $k$-neighborly $d$-polytope can be found by using $g$-theorem~\cite{Billera:1981,Stanley:1980}.

Let $M_d = (m_{i, j})$ be $(\lfloor d/2\rfloor + 1) \times (d+1)$-matrix with entries
\[
 m_{i, j} = \binom{d+1-i}{d+1-j} - \binom{i}{d+1-j}, \qquad 0 \le i \le \lfloor d/2\rfloor, \quad 0 \le j \le d.
\]
The matrix $M_d$ has nonnegative entries and the left $(\lfloor d/2\rfloor + 1) \times (\lfloor d/2\rfloor + 1)$-sub\-mat\-rix 
is upper triangular with ones on the main diagonal. 
The $g$-theorem states that the $f$-vector $f = (f_{-1}, f_0, f_1, \dots, f_{d-1})$ of a simplicial $d$-polytope 
is equal to $g M_d$ where $g$-vector $g = (g_0, g_1, \dots, g_{\lfloor d/2\rfloor})$ is an $M$-sequence
(see, e.g., \cite[Sec.~8.6]{Ziegler:1995}).

\begin{theorem}
If $P$ is a simplicial $k$-neighborly $d$-polytope ($k \le d/2$) with $n$ vertices and $n \ge d+2$, then
\[
 f_{j}(P) \ge \sum_{i=0}^{k} \left( \binom{d+1-i}{d-j} - \binom{i}{d-j}\right) \binom{n-d-2+i}{i}.
\]
In particular,
\[
 f_{d-1}(P) \ge \sum_{i=0}^{k} (d+1-2i) \binom{n-d-2+i}{i}.
\]
\end{theorem}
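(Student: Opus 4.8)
The plan is to invoke the $g$-theorem to write the $f$-vector of $P$ as $f = g M_d$ and then to exploit two features at once: the initial segment of the $g$-vector is pinned down by the neighborliness of $P$, while the remaining entries of $g$ and the remaining entries of $M_d$ are both nonnegative. Reading off the column of $M_d$ indexed by $j+1$ gives
\[
f_j = \sum_{i=0}^{\lfloor d/2\rfloor} g_i \left( \binom{d+1-i}{d-j} - \binom{i}{d-j} \right),
\]
and the asserted estimate is exactly the truncation of this sum to $0 \le i \le k$ after substituting $g_i = \binom{n-d-2+i}{i}$. So I would split the argument into two steps: (i) identify $g_i$ for $i \le k$, and (ii) show the discarded tail is nonnegative.

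For step (i) I would use that $P$ is $k$-neighborly and $(2k-1)$-simplicial, so every $(i-1)$-face with $i \le k$ is a simplex spanned by an $i$-subset of the $n$ vertices; hence $f_{i-1} = \binom{n}{i}$ for $0 \le i \le k$. Since the left $(\lfloor d/2\rfloor+1)$-square block of $M_d$ is upper triangular with unit diagonal, the relation $f = gM_d$ restricted to its first $k+1$ columns is an invertible triangular system expressing $g_0,\dots,g_k$ as a fixed, polytope-independent linear combination of $f_{-1},\dots,f_{k-1}$. Because these face numbers equal $\binom{n}{i}$ for \emph{every} $k$-neighborly simplicial $d$-polytope, the values $g_0,\dots,g_k$ coincide with those of the cyclic polytope $C(n,d)$, which is neighborly (hence $k$-neighborly) and whose $g$-vector is the maximal $M$-sequence $g_i = \binom{n-d-2+i}{i}$. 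Thus $g_i = \binom{n-d-2+i}{i}$ for $0 \le i \le k$. Alternatively, one verifies the Vandermonde-type identity $\binom{n}{j} = \sum_{i=0}^{j} \binom{n-d-2+i}{i}\bigl(\binom{d+1-i}{d+1-j} - \binom{i}{d+1-j}\bigr)$ directly by induction on $j$, the triangularity isolating the diagonal term $g_j$ at each stage.

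For step (ii) the $g$-theorem guarantees that $g$ is an $M$-sequence, so $g_i \ge 0$ for all $i$, in particular for $k < i \le \lfloor d/2\rfloor$. The matrix $M_d$ has nonnegative entries, as already recorded, so each discarded summand $g_i\bigl(\binom{d+1-i}{d-j} - \binom{i}{d-j}\bigr)$ with $i > k$ is nonnegative. Dropping them yields
\[
f_j \ge \sum_{i=0}^{k} \binom{n-d-2+i}{i}\left( \binom{d+1-i}{d-j} - \binom{i}{d-j}\right),
\]
as claimed. The particular case $j = d-1$ follows at once from $\binom{d+1-i}{1} - \binom{i}{1} = d+1-2i$.

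The hard part is step (i): pinning the initial segment of the $g$-vector to $\binom{n-d-2+i}{i}$. Everything else is the formal machinery of the $g$-theorem together with the sign information already available for $M_d$. The comparison with $C(n,d)$ makes (i) painless but relies on knowing its $g$-vector; for a self-contained treatment the triangular-inversion identity is the main computation to carry out, where the only genuine point to confirm is that the upper-triangular block really does decouple $g_0,\dots,g_k$ from the higher entries, which holds precisely because $k \le \lfloor d/2\rfloor$.
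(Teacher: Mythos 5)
Your proof is correct and follows essentially the same route as the paper: apply the $g$-theorem, pin down $g_0,\dots,g_k$ from the face numbers forced by $k$-neighborliness, and bound $f_j = \sum_i g_i m_{i,j+1}$ below by truncating to $i \le k$, the discarded terms being nonnegative because $g$ is an $M$-sequence and $M_d$ has nonnegative entries. The one point of divergence is how you identify the initial segment of the $g$-vector: your primary argument compares $P$ with the cyclic polytope $C(n,d)$ --- legitimate, since the invertible triangular system determines $g_0,\dots,g_k$ from $f_{-1},\dots,f_{k-1}$ alone and these are the same for all $k$-neighborly simplicial $d$-polytopes with $n$ vertices, but it presupposes knowing that the cyclic polytope's $g$-vector is $g_i = \binom{n-d-2+i}{i}$; the paper instead carries out exactly the inductive Vandermonde computation that you sketch as your alternative, which keeps the argument self-contained. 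Incidentally, your handling of the truncation step is stated more carefully than the paper's: the paper writes that one ``may assume $g_i = 0$ for $i > k$,'' which is really just the observation that the tail $\sum_{i>k} g_i m_{i,j+1}$ is nonnegative and can be dropped, exactly as you argue (you also use the correct column index $m_{i,j+1}$ where the paper has a typo $m_{i,j-1}$).
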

\begin{proof}
From the equality $f = g M_d$, we can evaluate the first $k+1$ entries of $g$:
\begin{equation}
\label{eq:GM}
	\begin{array}{r@{\ }l@{\ }l}
		g_0 &= f_{-1}(P) &= 1, \\
		g_0 m_{0, 1} + g_1 &= f_{0}(P) &= n, \\
		g_0 m_{0, 2} + g_1 m_{1, 2} + g_2 &= f_{1}(P) &= \binom{n}{2}, \\
		\hdots\\
		g_0 m_{0, k} + g_1 m_{1, k} + \dots + g_k &= f_{k-1}(P) &= \binom{n}{k}.
	\end{array}
\end{equation}

We suppose that $k \le d/2$ and $n \ge d+2$. By using induction on $j$, we prove
\begin{equation}
 \label{eq:Gi}
  g_{j} = \binom{n-d-2+j}{j}, \quad 0 \le j \le k.
\end{equation}
Obviously, $g_0 = 1$ and $g_1 = n-d-1$.
Suppose that the equality \eqref{eq:Gi} is true for $j = l$, $1 \le l \le k-1$.
From \eqref{eq:GM}, we have
\[
		\sum_{i=0}^{l} \binom{n-d-2+i}{i} \binom{d+1-i}{d-l} + g_{l+1} = \binom{n}{l+1}.
\]
Recall one of the formulation of Vandermonde's convolution:
\[
		\sum_{i} \binom{\alpha+i}{i} \binom{\beta-i}{\gamma-i} = \binom{\alpha+\beta+1}{\gamma}.
\]
Hence, $g_{l+1} = \binom{n-d-1+l}{l+1}$.

By $g$-theorem,
\[
 f_{j} = \sum_{i=0}^{\lfloor d/2 \rfloor} g_i m_{i, j+1}.
\]
Since $g$ is an $M$-sequence, we may assume $g_i = 0$ for $i > k$.
Hence,
\[
 f_{j} \ge \sum_{i=0}^{k} g_i m_{i, j-1} = \sum_{i=0}^{k} \binom{n-d-2+i}{i} m_{i, j+1}.
\]
\end{proof}

As a consequence of the theorem, we obtain~\eqref{eq:LBs2n}.
Note, that the right-hand side of \eqref{eq:LBs2n} increases monotonically with respect to both $d$ and $\Delta$.

Below we need the following lemma.

\begin{lemma}
\label{lem:3vert}
Let $P$ be a 2-neighborly $d$-polytope and $x$, $y$, $z$ be three different vertices of $P$.
If $x$, $y$, and $z$ are separated from the other vertices of $P$,
then $\conv\{x,y,z\}$ is a 2-face of $P$.
\end{lemma}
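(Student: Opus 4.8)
The plan is to reduce the statement to the following \emph{simplex--face criterion}: if $x,y,z$ are affinely independent vertices of $P$ and $R:=\vertices P\setminus\{x,y,z\}$, then $\conv\{x,y,z\}$ is a face of $P$ if and only if $\mathrm{aff}\{x,y,z\}\cap\conv R=\varnothing$. First I would record the cheap preliminaries: the three points are affinely independent (three extreme points cannot be collinear, since the middle one would fail to be a vertex), and, since $P$ is $2$-neighborly, each of $\{x,y\}$, $\{y,z\}$, $\{x,z\}$ is an edge of $P$. The criterion itself I would prove in both directions. The ``only if'' direction follows by taking a supporting hyperplane $H^{*}$ with $P\cap H^{*}=\conv\{x,y,z\}$, since then $\mathrm{aff}\{x,y,z\}\subseteq H^{*}$ while $\conv R$ lies strictly on one side. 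The ``if'' direction follows by projecting out the linear part of $\mathrm{aff}\{x,y,z\}$, which sends the flat to a point outside the image of the compact set $\conv R$, and pulling back a separating functional to obtain a linear form constant on $\{x,y,z\}$ and strictly smaller on every vertex of $R$.

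Granting the criterion, I would argue by contradiction. Let $\ell$ be a linear functional and $b\in\R$ with $\ell>b$ on $\{x,y,z\}$ and $\ell<b$ on $R$, as provided by the separation hypothesis; then $\ell>b$ on the triangle $\conv\{x,y,z\}$ and $\ell<b$ on $\conv R$. Suppose $\conv\{x,y,z\}$ is not a face. By the criterion there is a point $r\in\mathrm{aff}\{x,y,z\}\cap\conv R$. Since $r\in\conv R$ we have $\ell(r)<b$, whereas $\ell>b$ on the whole triangle, so $r$ lies in the plane $\mathrm{aff}\{x,y,z\}$ but outside the triangle $\conv\{x,y,z\}$. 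Consequently at least one barycentric coordinate of $r$ with respect to $x,y,z$ is negative; after relabelling I may assume the coordinate at $x$ is negative.

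The final step extracts a contradiction from this single negative coordinate. A negative barycentric coordinate at $x$ means precisely that $x$ and $r$ lie strictly on opposite sides of the line $\mathrm{aff}\{y,z\}$ inside the plane $\mathrm{aff}\{x,y,z\}$; hence the segment $[x,r]$ meets $\mathrm{aff}\{y,z\}$ in a point $m$. Because $x\in\vertices P\setminus\{y,z\}$ and $r\in\conv R\subseteq\conv(\vertices P\setminus\{y,z\})$, the whole segment, and in particular $m$, lies in $\conv(\vertices P\setminus\{y,z\})$. Thus $m\in\mathrm{aff}\{y,z\}\cap\conv(\vertices P\setminus\{y,z\})$, which by the one-dimensional case of the criterion contradicts the fact that $\{y,z\}$ is an edge of $P$. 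Therefore no such $r$ exists and $\conv\{x,y,z\}$ is a $2$-face.

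I expect the main obstacle to be the clean formulation and proof of the simplex--face criterion, especially its ``if'' direction (separating an affine flat from a compact convex set by passing to the quotient by the flat's direction); the rest of the argument is elementary plane geometry. A secondary point requiring care is that the separation must be used in its strict form, so that $r$ is \emph{strictly} outside the triangle and the offending barycentric coordinate is strictly negative; this guarantees that the crossing point $m$ lies in the relative interior of $[x,r]$ and that the two-negative-coordinate case is covered uniformly, since it suffices to select any one negative coordinate and name the corresponding vertex $x$.
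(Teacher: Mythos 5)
Your proof is correct and takes essentially the same route as the paper: both arguments reduce the claim to showing that the plane $A=\mathrm{aff}\{x,y,z\}$ avoids $\conv(\vertices P\setminus\{x,y,z\})$, using the separation hypothesis to handle the triangle itself and the edges guaranteed by $2$-neighborliness to exclude points of $A$ outside the triangle. Your write-up is in fact more detailed than the paper's, which merely asserts $A\cap P=\conv\{x,y,z\}$ and concludes face-ness; your two-way face criterion and the barycentric/segment-crossing argument supply exactly the justifications the paper leaves implicit.
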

\begin{proof}
Since $x$, $y$, and $z$ are separated from the other vertices of $P$,
then \[\conv\{x,y,z\} \cap \conv(\vertices P \setminus \{x,y,z\}) = \emptyset.\]
Let $A$ be an affine hull of $\{x,y,z\}$. Thus, $\dim A = 2$ and $A \cap P = \conv\{x,y,z\}$, since every pair of vertices of $P$ form a 1-face of $P$.
Therefore, $\conv\{x,y,z\}$ is a face of $P$.
\end{proof}


\begin{theorem}
	$\mn(d, v) \ge d + 7$ for $v \ge d+4$.
\end{theorem}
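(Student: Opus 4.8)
The plan is to anchor the bound at the known value $\mn(d,d+3)=d+7$ and reach every larger case by a vertex-deletion reduction. Given a $2$-neighborly $d$-polytope $P$ with $v\ge d+4$ vertices, I would delete one suitably chosen vertex $w$ and pass to $Q=\conv(\vertices P\setminus\{w\})$, aiming to show that $Q$ is again a $2$-neighborly $d$-polytope with $v-1$ vertices and with \emph{no more} facets than $P$. Iterating down to $v-1=d+3$ and quoting $\mn(d,d+3)=d+7$ (valid for $d\ge5$; the case $d=4$ is immediate since $f_{d-1}=v(v-3)/2\ge 20>d+7$ already at $v=d+4=8$) then gives $f_{d-1}(P)\ge d+7$.

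First I would nail down the two structural facts that make each deletion legitimate. Preservation of $2$-neighborliness is clean: every vertex $u\ne w$ survives as a vertex of $Q$, and if $u_1,u_2\ne w$ then the edge $[u_1,u_2]$ of $P$ carries a supporting hyperplane $H$ with $P\cap H=[u_1,u_2]$; since $Q\subseteq P$ and $u_1,u_2\in Q$ we get $Q\cap H=[u_1,u_2]$, so the pair is still an edge of $Q$. Preservation of dimension requires $w$ not to be a pyramid apex, for deleting $w$ lowers the dimension exactly when $P$ is a pyramid with apex $w$; and a non-apex vertex always exists while $v>d+1$, because the vertices of any base of a pyramid representation are themselves non-apexes. (As a cross-check, the pyramid case admits a direct treatment: if $P$ is a pyramid with base $B$, then $B$ is $2$-neighborly of dimension $d-1$ with $v-1\ge(d-1)+4$ vertices, and $f_{d-1}(P)=1+f_{d-2}(B)\ge 1+(d-1)+7=d+7$; this is also what makes the extremal iterated pyramids sharp.)

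The heart of the matter is that deleting a non-apex vertex cannot increase the number of facets. Reinserting $w$ into $Q$ by beneath--beyond gives the identity $f_{d-1}(P)=f_{d-1}(Q)+d_w-d'_w$, where $d_w$ is the number of facets of $P$ through $w$ (equivalently, the number of horizon ridges, i.e.\ the number of facets of the vertex figure $P/w$) and $d'_w$ is the number of facets of $Q$ visible from $w$, which are precisely the facets created by the deletion. Thus the whole reduction comes down to the inequality $d'_w\le d_w$: the ``cap'' that fills the hole left by $w$ must use no more facets than the star of $w$. Here I would press on $2$-neighborliness, since $w$ is joined by an edge to all $v-1$ remaining vertices, so $P/w$ is a $(d-1)$-polytope on $v-1$ vertices and its facet count $d_w$ is correspondingly large; Lemma~\ref{lem:3vert} should further constrain the cap, by limiting which triples of surviving vertices can bound a new $2$-face and hence how finely the cap can be subdivided.

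The inequality $d'_w\le d_w$ is the main obstacle, and it is exactly where $2$-neighborliness must be used in an essential way: for general polytopes it is false already in dimension $3$, where a vertex whose visible region has interior vertices leaves a cap with more facets than horizon ridges. But in dimension $3$ the only $2$-neighborly polytope is the simplex (the complete graph $K_v$ is nonplanar for $v\ge5$), so that obstruction disappears, and I expect the completeness of the edge graph at $w$ to rule it out in every dimension. The hard part will be making this uniform; if no single non-apex $w$ works, the fallback is to choose $w$ by an averaging argument---for instance a vertex on the fewest facets, or one separable together with two neighbours so that Lemma~\ref{lem:3vert} forces the adjacent cap facets to be triangles---so that $d_w$ dominates the cap it leaves behind, which closes the induction.
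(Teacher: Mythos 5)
Your argument does not close because its central step---the inequality $d'_w \le d_w$, i.e.\ that deleting a suitable non-apex vertex of a $2$-neighborly $d$-polytope never increases the number of facets---is exactly what you never prove. You correctly isolate it, note that it fails for general $3$-polytopes, and then only express the \emph{expectation} that completeness of the graph at $w$ rescues it in higher dimensions, with an unspecified averaging fallback. Nothing in the proposal supports that hope. The quantity $d_w$ is the number of facets of the vertex figure $P/w$; while $P/w$ is indeed a $(d-1)$-polytope with $v-1$ vertices, it need not be $2$-neighborly (that would require every triple $\{w,u,u'\}$ to span a $2$-face of $P$), and a polytope with many vertices can still have very few facets, so $d_w$ has no useful lower bound. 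Conversely, $d'_w$ counts the facets of the cap filling the hole, and $2$-neighborliness of $P$ says nothing direct about how finely that cap is subdivided. Lemma~\ref{lem:3vert} cannot be invoked for triples of neighbors of $w$ either: its hypothesis is that the three vertices are separated by a hyperplane from \emph{all} other vertices of $P$, a restrictive condition that arbitrary triples in the cap will not satisfy. In effect you have reduced the theorem to an unproved (and, as far as the results in this paper go, unknown) monotonicity statement $\mn(d,v-1)\le\mn(d,v)$; that is a program, not a proof. A smaller slip: ``the vertices of any base of a pyramid representation are themselves non-apexes'' is false (in a $2$-fold pyramid the second apex is a vertex of the base of the first representation); existence of a non-apex vertex for $v>d+1$ instead follows because a polytope all of whose vertices are apexes must be a simplex.

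For contrast, the paper avoids vertex deletion and monotonicity altogether and inducts on the dimension $d$. If $P$ (an $(m+1)$-polytope with at least $m+5$ vertices) is simplicial, the $g$-theorem bound \eqref{eq:LBs2n} finishes; otherwise $P$ has a nonsimplicial facet $Q$, itself a $2$-neighborly $m$-polytope, and one counts the facets of $P$ meeting $Q$ in ridges (at least $f_{m-1}(Q)+1$ of them, with $f_{m-1}(Q)$ bounded below by the induction hypothesis when $f_0(Q)\ge m+4$, and by the known values $\mn(m,m+3)=m+7$, $\mn(m,m+2)=m+5$ otherwise), plus, in the residual case where $f_0(Q)=m+2$ and every facet of $P$ has at most $m+2$ vertices, at least $m-1$ further facets through the $2$-face $\conv\{x,y,z\}$ supplied by Lemma~\ref{lem:3vert}. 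There the separation hypothesis of the lemma really is available: one takes $x,y,z$ to be three vertices outside $Q$ cut off by a hyperplane (nearly) parallel to the affine hull of $Q$. These extra facets are distinct from the earlier ones because a facet with at most $m+2$ vertices containing $x,y,z$ has at most $m-1$ vertices in $Q$ and hence cannot contain a ridge of $Q$. This local, one-step counting is what substitutes for the global deletion step your proposal could not supply.
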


\begin{proof}
The proof is by induction over $d$.
For $d=4$, the validity of the theorem immediately follows from~\eqref{EqNeighborly}.

Suppose that the theorem statement is true for $d = m$, $m \ge 4$.
Let $P$ be a $2$-neigh\-bor\-ly $(m+1)$-po\-ly\-tope with $f_0(P) \ge m+5$.
If $P$ is simplicial, then the statement follows from \eqref{eq:LBs2n}.

Now suppose that $P$ has a nonsimplicial facet $Q$.
Hence $Q$ is a $2$-neigh\-bor\-ly $m$-po\-ly\-tope with $f_0(Q) \ge m+2$.
By the induction hypothesis, if $f_0(Q) \ge m+4$, then $f_{m-1}(Q) \ge m+7$.
Therefore,
\[
 f_{m}(P) \ge f_{m-1}(Q) + 1 \ge m + 8.
\]
The same is true for $f_0(Q) = m+3$, since $f_{m-1}(Q) \ge m + 7$ in this case~\cite{Maksimenko:2018} (see Table~\ref{tab:1}).

Now it remains to verify the case, when $f_0(Q) = m+2$ and $P$ has no facets with $\ge m+3$ vertices. Thus, $f_{m-1}(Q) \ge m+5$ (see Table~\ref{tab:1}).
Moreover, there exist $x, y, z \in \vertices P \setminus \vertices Q$ that are separated from the other vertices of $P$. 
By Lemma~\ref{lem:3vert}, $f = \conv\{x,y,z\}$ is a~2-face of~$P$. Hence, there are at least $m-1$ facets of $P$ that are incident to $f$. Every such a facet has $\le m+2$ vertices, and three of them are $x, y, z$. Consequently, it does not have a common ridge ($(m-1)$-face) with $Q$. Therefore,
\[
f_{m}(P) \ge f_{m-1}(Q) + 1 + m-1 \ge 2m + 5.
\]
\end{proof}

Note, that by using $P_{6,10}$ (see fig.~\ref{fig:P610}) as the basis of an $m$-fold pyramid, one can deduce $\mn(d, d+4) \le d + 8$ for $d \ge 6$.
Therefore, $d + 7 \le \mn(d, d+4) \le d + 8$ for $d \ge 6$.

\section*{Acknowledgements}

The author is grateful to G\"unter Ziegler for valuable advices.

%
%

\end{document}